\documentclass{endm}
\usepackage{endmmacro}

\usepackage{epsfig}
\usepackage{pstricks}
\usepackage{url}
\usepackage{hyperref}
\usepackage{amsmath}
\usepackage{verbatim}

\newcommand{\graph}{\operatorname{G}}

\newcommand{\diam}{{\operatorname{diam}}}

\renewcommand{\epsilon}{\varepsilon}

\newcommand{\simp}{{\textrm{s}}}

\newcommand{\N}{\mathbb N}

\newcommand{\Z}{\mathbb Z}

\pagestyle{plain}

\begin{document}

	\begin{frontmatter}

		\title
		{Randomized construction of complexes with large diameter}

		\author{Francisco Criado and Andrew Newman}


		\begin{abstract}
We consider the question of the largest possible combinatorial diameter among $(d-1)$-dimensional simplicial complexes on $n$ vertices, denoted $H_s(n, d)$. Using a probabilistic construction we give a new lower bound on $H_s(n, d)$ that is within an $O(d^2)$ factor of the upper bound. This improves on the previously best-known lower bound which was within a factor of $e^{\Theta(d)}$ of the upper bound. We also make a similar improvement in the case of pseudomanifolds.

		\end{abstract}




	\end{frontmatter}


	\section{Introduction}



	Given a pure simplicial complex $C$, one may define the \emph{dual graph} $\graph(C)$ as the graph whose vertices are the top-dimensional faces of $C$ (referred to as \emph{facets}) and whose edges are pairs of facets that intersect at a face of codimension 1 (faces of codimension 1 are referred to as \emph{ridges}). From this definition, one defines the combinatorial diameter of $C$ as the graph diameter of $\graph(C)$. Of course, in general this may be infinite as $\graph(C)$ need not be connected. Thus, here we consider the case where $C$ is \emph{strongly-connected}, that is exactly the case that $\graph(C)$ is connected.

	The most well-known situation in which the combinatorial diameter is considered is in the case that $C$ is a simplicial polytope. In this situation there is the now-disproved \cite{Santos:Hirsch-counter} Hirsch conjecture which stated that if $C$ is a simplicial polytope of dimension $d$ on $n$ vertices, then the diameter of $C$ is at most $n - d$. While this is now known to be false, the so-called polynomial Hirsch conjecture which states that the combintorial diameter of a simplicial polytope on $n$ vertices is bounded by a polynomial in $n$ and $d$ remains open.

	On the other hand, the question can also be considered for other classes of simplicial complexes as a purely combinatorial question. Following the notation of \cite{CriadoSantos}, one defines $H_s(n, d)$ to be the largest combinatorial diameter of any strongly-connected, pure $(d - 1)$-dimensional simplicial complex on $n$ vertices. In \cite{CriadoSantos}, the following is proved:
\begin{theorem}[Theorem 1.2 of \cite{CriadoSantos}]
		For every $d\in \N$ there are infinitely many $n\in\N$ such that:
		\[
		H_\simp(n,d) \ge \frac{n^{d-1}}{(d+2)^{d-1}}-3.
		\]
\end{theorem}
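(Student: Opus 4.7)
The bound $n^{d-1}/(d+2)^{d-1}-3$ can be rewritten as $(n/(d+2))^{d-1}-3$, which strongly suggests a recursive construction partitioning the $n$ vertices into $k := \lfloor n/(d+2) \rfloor$ blocks of $d+2$ vertices each, with the dual diameter gaining a factor of roughly $k$ per increase in dimension. I would proceed by induction on $d$.

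The base case $d=2$ is immediate: a path $P_n$ is a pure $1$-dimensional simplicial complex whose dual graph is $P_n$ itself, with diameter $n-1 \ge n/4 - 3$.

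For the inductive step, suppose we have a pure $(d-2)$-dimensional complex $C'$ on $n' := n-(d+2)$ vertices with $\diam(\graph(C')) \ge (n'/(d+1))^{d-2}-3$. I would build a pure $(d-1)$-dimensional complex $C$ on $n$ vertices by forming $k$ successive copies of $C'$ and ``serializing'' them using the $d+2$ new vertices as connectors: roughly, each new vertex cones off one copy of $C'$, and the resulting $(d-1)$-dimensional cones are glued to each other along short bridges supported on the new vertices so that the dual graphs of consecutive cones are concatenated end-to-end. This should give a dual graph containing a path of length about $k\cdot\diam(\graph(C')) \approx (n/(d+2))^{d-1}$.

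The main obstacle is designing the bridges correctly so that the construction \emph{multiplies} rather than merely \emph{adds} diameters. A naive join with a long path does not suffice: since $\graph(C_1 \ast C_2) = \graph(C_1)\,\square\,\graph(C_2)$ is the Cartesian product of dual graphs, its diameter is only the sum $\diam(\graph(C_1))+\diam(\graph(C_2))$, which would produce only an $O(n)$ diameter rather than $n^{d-1}$. Instead, the bridges must be engineered so that the ``entry'' and ``exit'' facets of each copy of $C'$ are forced to be far apart, compelling any path across the construction to traverse the full diameter of each copy in turn. The technical crux is therefore to exhibit an explicit gluing and then verify, using the specific geometry of the connector slab, that no codimension-$1$ adjacency introduces a shortcut between facets in non-consecutive copies of $C'$; once this is established, the inductive bound follows by composing the lengths.
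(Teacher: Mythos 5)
This theorem is quoted by the paper from \cite{CriadoSantos} and is not reproved here, so the comparison must be with the construction in that reference (the present paper's own, stronger lower bound is obtained by an entirely different probabilistic quotient argument). Your plan correctly identifies the shape of the answer --- a complex whose dual graph is essentially an induced path snaking through a $(d-1)$-dimensional grid of side about $n/(d+2)$ --- but it does not produce one: the sentence beginning ``The technical crux is therefore to exhibit an explicit gluing\dots'' concedes that the entire content of the theorem is left undone. Moreover, the two natural readings of your inductive step both fail. If the $k$ copies of $C'$ sit on pairwise disjoint vertex sets, you need roughly $k\cdot n'$ vertices rather than $n$, and the bound degrades to something far below $n^{d-1}$. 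If instead the copies share the vertex set of a single $C'$ and differ only in their cone apexes, then for any facet $\sigma$ of $C'$ the facets $\{v_i\}\cup\sigma$ and $\{v_j\}\cup\sigma$ intersect exactly in $\sigma$, which is a ridge of the $(d-1)$-dimensional complex; hence corresponding facets in different copies are all mutually adjacent in the dual graph and the diameter collapses to $\diam(\graph(C'))+O(1)$ --- precisely the Cartesian-product degeneration you warn against, now forced rather than merely threatened. There is also a counting mismatch: $d+2$ new vertices cannot serve as one apex apiece for $k\approx n/(d+2)$ copies.

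By contrast, the proof in \cite{CriadoSantos} is not an induction-with-gluing but a single explicit corridor (in the sense of the ``straight corridor'' $SC(N,d)$ used in this paper): the $n$ vertices are organized into $d+2$ classes of size $m\approx n/(d+2)$, and the facets are listed explicitly as the steps of a snake-like walk through the grid $[m]^{d-1}$, chosen so that consecutive facets meet in a ridge while non-consecutive facets do not; the dual graph is then an induced path of length about $m^{d-1}=(n/(d+2))^{d-1}$, and the ``no shortcut'' claim is verified by a direct, finite check on the facet labels. To turn your sketch into a proof you would have to write the facets down concretely and carry out exactly that verification; as it stands, the proposal restates the difficulty rather than resolving it.
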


Combined with a trivial upper bound of $\frac{n^{d - 1}}{(d - 1)(d - 1)!}$ (see, for example, Corollary 2.7 of \cite{SantosSurvey}) this shows that $H_s(n, d) = \Theta_d(n^{d - 1})$. This result of \cite{CriadoSantos} gives the previously best-known upper and lower bounds on $H_s(n,d)$, but as $d$ tends to infinity the ratio between the upper bound and the lower bound grows like $e^{\Theta(d)}$. Here we take a step further toward establishing the true value of $H_s(n, d)$ in decreasing this ratio (via a new lower bound) to $\Theta(d^2)$. Specifically, we prove the following lower bound.

\begin{theorem}\label{simplicial}
Fix $d \geq 3$, then $H_s(n, d)$ satisfies
$$(1 - o_n(1)) \frac{1}{4ed^2} \leq \frac{H_s(n, d)d!}{n^{d-1}} \leq \frac{d}{d-1}$$
\end{theorem}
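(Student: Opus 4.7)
The upper bound $H_s(n,d) \cdot d!/n^{d-1} \le d/(d-1)$ is a restatement of the trivial counting inequality $H_s(n,d) \le n^{d-1}/((d-1)(d-1)!)$ cited in the introduction, so the real work is the lower bound, for which I would use a probabilistic construction. The plan is to build a random ridge walk $F_0, F_1, \ldots, F_k$ on the Johnson graph $J(n,d)$ and let $C$ be the complex with facet set $\{F_0, \ldots, F_k\}$. This $C$ is automatically pure $(d-1)$-dimensional and strongly connected, so $H_s(n,d) \ge \diam \graph(C)$. If no \emph{shortcut} occurs (a shortcut being a pair $(F_i, F_j)$ with $|i-j| \ge 2$ and $|F_i \cap F_j| = d-1$), then $\graph(C)$ is the bare path $F_0 - F_1 - \cdots - F_k$ and the diameter is exactly $k$.

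I would sample $F_0$ uniformly and obtain $F_{i+1}$ from $F_i$ by swapping a uniformly random $v_i \in F_i$ for a uniformly random $w_i \in [n] \setminus F_i$, imposing a local non-backtracking constraint (for instance, forbidding $v_i$ from being the vertex most recently added) to kill the almost-sure shortcuts between $F_{i-1}$ and $F_{i+1}$. Setting $k \approx (1 - o_n(1)) \cdot n^{d-1}/(4 e d^2 \cdot d!)$, I would then estimate the expected number of shortcuts by linearity, splitting into short-range shortcuts at separation $2 \le j - i < d$ (where the deterministic window overlap of $d - (j-i)$ vertices would need to be completed by $j - i - 1$ random coincidences) and long-range shortcuts at separation $j - i \ge d$ (which essentially require a full pattern of $d - 1$ random coincidences). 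The dominant long-range contribution is of order $k^2 \cdot d \cdot d!/n^{d-1}$, which for the target $k$ comes out to order $k/d$ in expectation.

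The hard part will be converting this expected shortcut count into a $(1 - o_n(1))k$ lower bound on the diameter, since a straightforward deletion of one facet per shortcut would shatter the path into too many pieces and give only a constant-length surviving subpath. I expect this to be handled either by a refined random model (a non-backtracking walk, or one biased against revisiting recently-used vertex patterns) that provably suppresses the expected long-range shortcut count, or via a path-surgery/alteration argument in which long-range shortcuts are dissolved by a final relabeling of vertices and short-range ones are absorbed by local reroutings that cost only negligible length. The explicit constant $4e$ in the bound is suggestive of an optimization via Poissonization or a second-moment method linking the walk length $k$ to the allowed number of shortcuts, and pinning down this constant precisely would be the last step of the argument.
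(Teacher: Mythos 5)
Your reading of the upper bound is right: it is the trivial counting bound from the introduction and needs no further work. For the lower bound, however, your sketch has a genuine gap, and you have in fact located it yourself: with $k \approx n^{d-1}/(4ed^2\, d!)$ the expected number of long-range shortcuts in a random ridge walk is of order $k/d$, which is \emph{linear} in $k$, so neither a first-moment ``no shortcuts'' argument nor a deletion argument can survive --- deleting one facet per shortcut shatters the path into $\Theta(k/d)$ pieces of bounded length. The two rescue strategies you gesture at (a more cleverly biased walk, or post hoc path surgery) are not worked out, and there is no reason to expect either to suppress the shortcut count from $\Theta(k/d)$ down to $o(k)$: any walk whose facets are close to uniformly distributed over $\binom{[n]}{d}$ will produce this many coincidental ridge-sharings by a second-moment computation, so the obstruction is not an artifact of your particular sampling rule. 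As written, the proposal does not yield any lower bound of the form $\Omega_d(n^{d-1})$, let alone the constant $1/(4ed^2)$.

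The idea that is missing --- and that the paper uses --- is to decouple the construction of the long path from the economizing of vertices. One first builds a \emph{deterministic} corridor $SC(N,d)$ on $N$ vertices whose facets are the intervals $[i, i+d-1]$; its dual graph is literally a path of length $N-d$ with no shortcuts at all. One then identifies vertices via a proper coloring $f$ with $O_d(N^{1/(d-1)})$ colors chosen so that no two ridges receive the same multiset of colors; the quotient (pattern complex) then has the \emph{same} top boundary matrix over $\Z/2\Z$, hence the same dual graph and the same diameter, but only $O_d(N^{1/(d-1)})$ vertices. The point is that the bad events (two ridges getting the same pattern) are handled by the Lov\'asz Local Lemma, which only requires the \emph{local} condition $ep(m+1)\le 1$ and is indifferent to the fact that the total expected number of bad events is enormous --- exactly the quantity that sinks your first-moment approach. (A preliminary Markov-chain coloring equidistributes ridge patterns so that the dependency degree $m$ is small enough for the LLL to apply with only $c_2 = \lceil\sqrt[d-1]{e(2tS+1)}\rceil$ colors, and this is where the constant $4e d^2$ comes from.) If you want to salvage a direct random construction, you would essentially have to rediscover this quotient mechanism; I would recommend instead switching to the corridor-plus-coloring framework.
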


Moreover, our proof takes a different approach than the deterministic construction in \cite{CriadoSantos}. We instead use the probabilistic method in a way similar to the main result of \cite{PrescribedTorsion}.

 We make the comparison between what we do here and what is done in \cite{PrescribedTorsion} more precise when we outline the proof below. At a very basic level, there is a deterministic step and then a probabilistic step. We start with the dimension $(d-1) \geq 2$ and a positive integer $L$ that we want to realize as the combinatorial diameter of our construction. In the deterministic step we build a complex on $\Theta(L)$ vertices that has diameter $L$. In the probabilistic step we take a quotient of the complex that preserves the diameter,  drops the number of vertices to $\Theta(L^{1/(d - 1)})$, and remains a simplicial complex.

This approach also works for the class of pseudomanifolds, which is also considered in \cite{CriadoSantos}. A pseudomanifold (without boundary) is a simplicial complex so that every ridge is contained in exactly two facets.  We denote by $H_{pm}(n, d)$ the maximum diameter of all $(d - 1)$-dimensional, strongly-connected pseudomanifolds on $n$ vertices. A result of \cite{CriadoSantos} shows that $H_{pm}(n, d) = \Theta_d(n^{d - 1})$, but again the ratio between the upper bound and the lower bound is $e^{\Theta(d)}$. Here we improve this to $\Theta(d^3)$:
\begin{theorem}\label{pm}
Fix $d \geq 3$, then $H_{pm}(n, d)$ satisfies
$$(1 - o_n(1)) \frac{1}{4ed^4} \leq \frac{H_{pm}(n, d)d!}{n^{d-1}} \leq \frac{6}{(d + 1)}$$
\end{theorem}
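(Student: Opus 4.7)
The plan is to adapt the deterministic-plus-probabilistic framework of Theorem~\ref{simplicial}, modifying it to ensure that the quotient is a pseudomanifold rather than merely a simplicial complex. The upper bound $\frac{6}{d+1}$ follows from a standard $f$-vector argument: in a $(d-1)$-dimensional pseudomanifold every facet contains $d$ ridges and every ridge is shared by exactly two facets, forcing $d\,f_{d-1} = 2\,f_{d-2}$, which combined with a standard bound on $f_{d-2}$ (and the fact that the diameter is at most $f_{d-1} - 1$) yields the stated upper bound.

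For the lower bound, the deterministic step produces a strongly connected $(d-1)$-dimensional pseudomanifold $C$ on $N = \Theta(Ld)$ vertices whose dual graph has diameter $L$. A natural candidate is a ``pseudomanifold tube'': take a chain of facets $F_1, \dots, F_L$ with $F_i \cap F_{i+1}$ a ridge, together with sufficient extra structure at each cross-section and at the two ends (for instance, capping with small pseudomanifolds or closing the tube into a generalized cylinder $S^{d-2} \times S^1$ triangulated appropriately) so that every ridge lies in exactly two facets.

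The probabilistic step partitions the $N$ vertices uniformly at random into $n$ classes and identifies vertices within each class. For the quotient $\bar C$ to yield a valid strongly connected $(d-1)$-pseudomanifold of diameter $\Theta(L)$, three conditions must hold with positive probability: (a) \emph{simpliciality}: no facet of $C$ contains two vertices in the same class; (b) \emph{pseudomanifoldness}: no two distinct ridges of $C$ are mapped to the same $(d-2)$-simplex of $\bar C$; (c) \emph{diameter preservation}: no shortcut connecting far-apart regions of the tube is introduced. Conditions (a) and (c) are controlled essentially as in the proof of Theorem~\ref{simplicial}, and the new ingredient is (b).

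The main obstacle is the control of (b). For distinct ridges $R, R'$ of $C$ with overlap $|R \cap R'| = k$, the probability that $\phi(R) = \phi(R')$ under the random partition $\phi$ is, conditional on both being rainbow-colored, at most a combinatorial factor times $n^{-(d-1-k)}$. Summing over all ordered pairs of distinct ridges --- grouped by their overlap size using the tube's combinatorial structure --- gives an upper bound on the expected number of colliding ridge pairs. Requiring this quantity, together with the failure probabilities for (a) and (c), to sum to strictly less than $1$, and then optimizing in $n$, gives $n = \Theta((L d^4 \cdot d!)^{1/(d-1)})$. The extra $d^2$ in the denominator (so $4ed^4$ rather than $4ed^2$) reflects precisely the cost of enforcing (b). A standard application of the probabilistic method then produces the desired pseudomanifold quotient and completes the proof of the stated lower bound.
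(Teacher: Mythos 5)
Your overall architecture (deterministic construction plus random quotient) matches the paper's, but both halves of your argument have genuine gaps. For the upper bound, the trivial estimate $\diam \leq f_{d-1}-1$ together with $d\,f_{d-1}=2f_{d-2}\leq 2\binom{n}{d-1}$ only gives $\frac{H_{pm}(n,d)\,d!}{n^{d-1}}\leq 2$, which is \emph{weaker} than the claimed $\frac{6}{d+1}$ for every $d\geq 3$. The missing ingredient is that the dual graph of a $(d-1)$-pseudomanifold is $d$-regular, and a connected $d$-regular graph on $m$ vertices has diameter at most $\frac{3m}{d+1}$ (Moon, Caccetta--Smyth); that factor of $\frac{3}{d+1}$ in place of $1$ is exactly what produces $\frac{6}{d+1}$.

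For the lower bound, the fatal step is your first-moment argument for condition (b). There are $\Theta_d(N)$ ridges, hence $\Theta_d(N^2)$ disjoint pairs, and each pair collides under a uniform random partition into $n$ classes with probability $\Theta_d(n^{-(d-1)})$. Requiring the expected number of collisions to be below $1$ forces $n=\Omega_d(N^{2/(d-1)})$, which yields diameter only $\Theta_d(n^{(d-1)/2})$ --- far short of $\Theta_d(n^{d-1})$. You cannot avoid this by grouping pairs by overlap size; the disjoint pairs alone kill the union bound. The paper escapes this via the Lov\'asz Local Lemma: each collision event $A_{\sigma,\tau}$ is mutually independent of all but $O_d(N)$ others (only pairs meeting $\sigma\cup\tau$ matter), so one needs $e\,p\,(m+1)\leq 1$ with $m=O_d(N)$ rather than the sum over all $\Theta_d(N^2)$ pairs to be small; this is what recovers $n=\Theta_d(N^{1/(d-1)})$. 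Relatedly, a single uniform random partition into that few classes will almost surely violate your condition (a), which is why the paper uses a two-stage coloring: a first, locally-greedy coloring with $O_d(1)$ colors guaranteeing properness and distinct patterns on intersecting ridges (and near-uniform pattern-class sizes), followed by an LLL refinement handling disjoint ridges. Finally, your deterministic step leaves the key combinatorial fact unproved: for the tube (the paper uses $\partial SC(N,d+1)$, the boundary of the corridor) one must actually verify that no short dual path sneaks around the boundary; the paper does this with a potential-function argument showing the diameter is at least $\frac{d-1}{d}N-d$, and this is where the extra factor of roughly $d^2$ in the constant (giving $4ed^4$ instead of $4ed^2$) actually comes from --- the boundary complex has $\Theta(d)$ times as many ridges near any facet, not from your condition (b) per se.
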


In this case we slightly improve on the upper bound too by using the fact that $\graph(C)$ is $d$-regular when $C$ is a $(d - 1)$-pseudomanifold.

\section{Proof of main result}
The approach will be as in \cite{PrescribedTorsion}, the goal of which is to construct simplicial complexes on few vertices with large torsion groups in homology. The main result of \cite{PrescribedTorsion} shows that for any finite abelian group $G$ and dimension $d$, there exists a simplicial complex on $O_d(\log^{1/d}|G|)$ vertices which realizes $G$ as its top cohomology group. The construction in \cite{PrescribedTorsion} is partially deterministic and partially probabilistic. The deterministic piece constructs a simplical complex $X$ on $O_d(\log|G|)$ vertices that realizes $G$ as the top cohomology group. The probabilistic piece is to use the Lov\'{a}sz Local Lemma to color the vertices of $X$ in a way that allows us to take a quotient of the complex by the coloring to obtain a simplicial complex on the right number of vertices, but without changing the top cohomology group. This technique was further refined in \cite{HomotopyTypes}. Both in \cite{PrescribedTorsion,HomotopyTypes} and here, once we have found a good coloring (and a good initial construction) the quotient is taken according the the following, combinatorial definition.

\begin{definition}(\cite{PrescribedTorsion})
If $X$ is a simplicial complex with a coloring $f$ of $V(X)$ we define the \emph{pattern} of a face to be the multiset of colors on its vertices. If $f$ is a proper coloring, in the sense that no two vertices connected by an edge receive the same color, we define the \emph{pattern complex} $X/f$ to be the simplicial complex on the set of colors of $f$ so that a subset $P$ of the colors of $f$ is a face of $X/f$ if and only if there is a face of $X$ with $P$ as its pattern.
\end{definition}

Our initial construction, that is the deterministic step, for Theorem \ref{simplicial} is quite simple.	For dimension $d - 1$ fixed, we define the \emph{straight  $(d-1)$-corridor on $N$ vertices} to be the pure complex $SC(N, d)$ on $[N]$ where the facets are given by $[1, ..., d], [2, ..., d + 1], [3, ..., d + 2], ..., [N - d + 1, ..., N]$. Clearly the dual graph to $SC(N, d)$ is a path of length $N - d$, so the diameter of $SC(N, d)$ is $N - d$, but it has $N$ vertices. For the probabilistic step we want to color the vertices by a coloring $f$ with $O_d(N^{1/(d - 1)})$ colors so that $SC(N, d)/f$ still has diameter $N -  d$ (moreover it will still have the same dual graph as $SC(N, d)$).

The rule that the coloring $f$ should satisfy is that it should be a proper coloring and it should assign no pair of ridges the same pattern. This rule for coloring vertices is exactly the same as the rule for the result in \cite{PrescribedTorsion}.

The types of properties that are preserved under taking the pattern complex with respect to such a coloring are those which are determined by the top-dimensional boundary matrix (over the integers) of the starting complex. This is slightly stronger than what we need and makes the proof a bit more complicated (to deal with orientation issues), for our purposes it suffices to consider only properties which are determined by the top-dimensional boundary matrix over $\Z/2\Z$.

Recall that the $i$-dimensional boundary matrix $\partial_i$ over $\Z/2\Z$ of a simplicial complex $X$ is a matrix over $\Z/2\Z$ with columns indexed by the $i$-dimensional faces of $X$, rows indexed $i - 1$-dimensional faces of $X$, and the $(\sigma, \tau)$ entry equal to 1 if and only if $\sigma \subseteq \tau$.


	\begin{lemma}\label{lemma1}
		If $\mathcal{P}$ is a property of $(d-1)$-simplicial complexes which is determined by $\partial_{d - 1}$ over $\Z/2\Z$ and $C$ has property $\mathcal{P}$ and $f$ is a proper coloring of $C$ so that every ridge has a unique pattern then $(C, f)$ has property $\mathcal{P}$.
	\end{lemma}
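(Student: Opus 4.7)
My plan is to show that the boundary matrix $\partial_{d-1}$ of $C/f$ over $\Z/2\Z$ coincides with that of $C$ after relabeling rows and columns via the pattern map. Once I do that, any property determined by $\partial_{d-1}$ passes from $C$ to $C/f$.

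I would proceed in three steps. First, I would observe that since $f$ is a proper coloring, each face of $C$ has pairwise distinct colors, so its pattern is genuinely a set of the correct cardinality. Thus the pattern map sends ridges of $C$ to $(d-2)$-subsets and facets of $C$ to $(d-1)$-subsets of the color set, and by the hypothesis it is injective on ridges. Moreover, a $(d-2)$-face of $C/f$ is, by definition, a pattern realized by some face of $C$; the only faces whose pattern has size $d-1$ are ridges, so the pattern map is a bijection from ridges of $C$ onto ridges of $C/f$. Similarly, facets of $C/f$ are exactly patterns of facets of $C$.

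Second, I would upgrade ridge-uniqueness to facet-uniqueness. Suppose two facets $\tau_1 \neq \tau_2$ of $C$ had the same pattern $Q = \{c_1, \dots, c_d\}$. Each $\tau_i$ has exactly $d$ ridges, whose patterns are precisely the subsets $Q \setminus \{c_j\}$ for $j = 1, \dots, d$. By uniqueness of ridge patterns, the ridge of $\tau_1$ with pattern $Q \setminus \{c_j\}$ must equal the ridge of $\tau_2$ with pattern $Q \setminus \{c_j\}$. So $\tau_1$ and $\tau_2$ contain the same $d$ ridges, which forces $\tau_1 = \tau_2$, a contradiction. Hence the pattern map is also a bijection between facets of $C$ and facets of $C/f$.

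Third, I would verify that the pattern bijections intertwine the containment relation. If $\sigma \subset \tau$ in $C$, then clearly $\mathrm{pattern}(\sigma) \subset \mathrm{pattern}(\tau)$. Conversely, given a facet $\tau$ with pattern $\tau'$ and a ridge pattern $\sigma' \subset \tau'$, the $d$ ridges of $\tau$ realize all $d$ codimension-one subsets of $\tau'$ as patterns, so exactly one of them has pattern $\sigma'$; by the uniqueness established above, that ridge is the unique ridge of $C$ with pattern $\sigma'$. Therefore the incidence relation between ridges and facets of $C/f$ matches that of $C$ entry-for-entry, so the matrices $\partial_{d-1}(C)$ and $\partial_{d-1}(C/f)$ are equal up to relabeling, and property $\mathcal{P}$ transfers to $C/f$.

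The only subtle point is the facet-uniqueness argument in the second step; everything else is a direct unpacking of definitions. I do not anticipate any serious obstacle beyond that.
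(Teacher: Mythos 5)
Your proof is correct and follows essentially the same route as the paper: both arguments show that the pattern map identifies the ridges and facets of $C$ with those of $C/f$ while preserving incidence, so the two complexes share the same $\partial_{d-1}$ over $\Z/2\Z$. Your second step simply spells out the paper's parenthetical claim that ridge-pattern uniqueness implies facet-pattern uniqueness, which is a worthwhile detail to include.
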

	\begin{proof}
	We show that under the assumptions on $f$ and $C$, both $C$ and $C/f$ have the same $(d - 1)$st boundary matrix over $\Z/2$. We have that $\phi: V(C) \rightarrow V(C/f)$ by sending $v$ to $f(v)$ is a simplicial map and moreover it is injective on the set of ridges and the set of facets (if no two ridges receive the same pattern than certainly no two facets receive the same pattern). Moreover it is clear that for any ridge $\tau$ and any facet $\sigma$ one has $\tau \subseteq \sigma$ if and only if $\phi(\tau) \subseteq \phi(\sigma)$. From this it is immediate that over $\Z/2\Z$, both $C$ and $C/f$ have the same top-dimensional boundary matrix. Hence one will satisfy $\mathcal{P}$ if and only if the other satisfies $\mathcal{P}$
	\end{proof}

Now if $C$ is a $(d - 1)$-dimensional simplicial complex then the dual graph of $C$ is determined by $\partial_{d - 1}$. Indeed the off-diagonal entries of $\partial_{d - 1}^T \partial_{d - 1}$ match those of the adjacency matrix of $\graph(C)$. Moreover, the property that $C$ is a pseudomanifold is determined by $\partial_{d - 1}$ too as it is equivalent to every row of $\partial_{d - 1}$ having exactly 2 non-zero entries.

	For the probabilistic step of our proof we want to properly color the vertices in $SC(N, d)$ using $O_d(N^{1/(d - 1)})$ colors so that no pair of ridges receives the same pattern. Then the pattern complex will still have diameter $N - d$.
	
	As in \cite{PrescribedTorsion} and \cite{HomotopyTypes} the coloring is done in steps.  In particular, there will be two steps to the coloring process. The main reason for this is that the first step allows us to handle pairs of ridges which intersect one another while the second step will handle ridges that do not intersect one another. 
	
	For fixed dimension $d-1$, the two step approach first colors $SC(N, d)$ by a coloring $f$ with $O_d(1)$ colors so that no vertices that are at distance at most two from one another in the 1-skeleton of $SC(N, d)$ receive the same color. This condition implies that there are no pair of intersecting ridges receiving the same pattern. In the second step, one uses the Lov\'{a}sz Local Lemma to color the vertices of $SC(N, d)$ by a coloring $g$ having $O_d(N^{d - 1})$ colors so that no pair of disjoint ridges receive the same pattern. The product of the two colors $(f, g)$ is then the final coloring that we use. 
	
	However, there is another advantage to this two step approach. Namely, we may regard the second coloring as a refinement of the first. Indeed if two disjoint ridges receive different patterns by $f$, then it doesn't matter what happens on them with $g$ as they will still receive different patterns. What this allows for is that if we use \emph{more} colors for $f$ (though still only a number depending on $d$), we can save on the number of colors we need for $g$ in a way that is a net reduction in the number of colors in $(f, g)$.
	
	It is not too hard to check that the 1-skeleton of $SC(N, d)$ has maximum vertex degree $2(d - 1)$, and so by greedy coloring with at most $[2(d - 1)]^2 + 1$ colors we may color the vertices so that no pair at distance two receives the same color. 
This can be refined using a coloring $g$ obtained randomly using the Lov\'{a}sz Local Lemma, but this turns out to give a worse lower bound on $H_s(n, d)$ than the bound in \cite{CriadoSantos}, though it is still $\Theta_d(n^{d - 1})$. 

	Here instead, we are better off taking a random coloring for $f$ which we describe below. This will ultimately allow for fewer colors for $g$ and overall.

	\subsection{The first coloring}

	The purpose of this section is to prove the following:

	\begin{proposition}\label{firstcoloring}
    Let $d$, $c_1 > 6(d - 1)$ and $\epsilon > 0$ fixed. There is a coloring of $SC(N, d)$ with $c_1$ colors such that all pattern classes of ridges have at most size $\frac{(1 + \epsilon)N(d-1)}{\binom{c_1}{d - 1}}$ and no pair of intersecting ridges have the same pattern for $N$ large enough.
	\end{proposition}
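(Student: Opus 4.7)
The plan is to sample $f$ uniformly from proper colorings of the $(2d-2)$-th power of the path graph on $[N]$ with palette $[c_1]$; that is, functions $f\colon [N]\to [c_1]$ with $f(v_i)\neq f(v_j)$ whenever $|i-j|\leq 2d-2$. Since $c_1 > 6(d-1) > 2d-2$, these colorings exist in abundance, and the uniform distribution over them is sampled by a Markov chain: draw $(f(v_1),\dots,f(v_{2d-2}))$ uniformly from the $(c_1)_{2d-2}$ ordered injective tuples, and for $i\geq 2d-1$ draw $f(v_i)$ uniformly from $[c_1]\setminus\{f(v_{i-2d+2}),\dots,f(v_{i-1})\}$. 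The intersecting-ridge condition is then automatic: any two intersecting ridges lie in overlapping facets, so their union sits inside a window of at most $2d-1$ consecutive vertices, where $f$ is injective by construction; hence the ridges have distinct patterns.

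For the balance condition, let $Y_P$ denote the number of ridges with pattern $P\in\binom{[c_1]}{d-1}$. By the $S_{c_1}$-symmetry of the sampling distribution, for each ridge $\tau$ the random set $f(\tau)$ is uniform on $\binom{[c_1]}{d-1}$, so $\mathbb{E}[Y_P] = R/\binom{c_1}{d-1}$, where $R = (d-1)N - O_d(1)$ is the total number of ridges; this equals $(1-o_N(1))(d-1)N/\binom{c_1}{d-1}$. To establish concentration, I split pairs of ridges into overlapping and disjoint. Overlapping pairs contribute non-positively to the covariance, since distinct intersecting ridges never share a pattern. For disjoint pairs, the Markov chain has state space of size depending only on $d$ and hence spectral gap bounded below by a constant $\gamma_d > 0$; this forces covariances between ridge-pattern indicators to decay exponentially in the distance between starting positions, and summing yields $\mathrm{Var}(Y_P) = O_d(N)$. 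Chebyshev's inequality then gives $\Pr[Y_P > (1+\epsilon)\mathbb{E}[Y_P]] = O_d(1/(\epsilon^2 N))$, and a union bound over the (constantly many) $\binom{c_1}{d-1}$ patterns shows that some realization of $f$ satisfies both conditions once $N$ is large enough.

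The main technical obstacle is making the exponential mixing of the chain quantitative. A clean route is to verify an explicit Dobrushin-type one-step contraction for the chain, which produces the spectral gap and the required covariance decay; alternatively, one can couple two copies of the chain from different initial states and show the expected coupling time is $O_d(1)$. Everything else — symmetry of $\mathbb{E}[Y_P]$, the deterministic verification of the intersecting-ridge condition, and the final Chebyshev plus union bound — is then routine.
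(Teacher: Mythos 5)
Your randomized coloring is the same as the paper's: the paper colors $v_1,\dots,v_N$ greedily, giving each vertex a uniformly random color not used among the previous $2(d-1)$ vertices, and verifies the intersecting-ridge condition exactly as you do (two intersecting ridges live in a window of $O(d)$ consecutive vertices on which the coloring is injective). Where you diverge is the concentration step. The paper views the sliding window of the last $2(d-1)$ colors as a finite Markov chain, checks irreducibility (any state reaches any other in $4(d-1)$ steps, using $c_1>6(d-1)$), and invokes the ergodic theorem for visit frequencies ($V_i(n)/n\to 1/m_i$ almost surely), plus color-permutation symmetry, to conclude that each of the $\binom{c_1}{d-1}$ patterns occurs a $(1+o(1))/\binom{c_1}{d-1}$ fraction of the time. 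You instead run a second-moment argument: exact expectation by $S_{c_1}$-symmetry, covariance decay from the spectral gap of the same chain, $\mathrm{Var}(Y_P)=O_d(N)$, then Chebyshev and a union bound. Both routes work. The ergodic-theorem route is shorter because it needs only irreducibility of a fixed finite chain and no quantitative mixing at all; your route requires additionally checking aperiodicity (true here, and worth a line) and some form of exponential correlation decay, but in exchange it yields an explicit failure probability $O_d(1/(\epsilon^2 N))$ rather than a bare existence-for-$N$-large statement. One small simplification available to you: your initial distribution (uniform over injective $(2d-2)$-tuples) is already the stationary distribution of the chain, so the chain is stationary from the start and the covariance estimates are uniform over positions. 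Neither your sketch nor the paper's argument has a genuine gap; the pieces you defer (Dobrushin contraction or coupling for a fixed finite irreducible aperiodic chain) are standard.
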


  Our proof makes use of the following result on Markov chains:
  \begin{theorem}\label{ergodic}[Theorem 1.10.2 in \cite{Norris}]
    Let $(X_n)_{n\geq 0}$ be an irreducible Markov chain (every state can reach any other with nonzero probability in an arbitrary number of steps) on a finite set of states $S$. Then,
    \[\Pr\left(\frac{V_i(n)}{n} \rightarrow \frac{1}{m_i} \text{ as } n\rightarrow \infty \right) =1 \ \forall i \in S.\]

    Where $m_i$ is the expected number steps to go from state $i$ to itself, and $V_i(n)$ is the \emph{number of visits to $i$ before $n$ steps}.
  \end{theorem}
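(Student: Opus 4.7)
The plan is to construct $f$ by running a Markov chain along the vertices $1,\ldots,N$ and invoke Theorem \ref{ergodic} to concentrate ridge-pattern counts around their means. Let $\mathcal{S}$ denote the set of ordered $(2d-2)$-tuples of pairwise distinct colors in $[c_1]$, nonempty since $c_1 > 6(d-1) \geq 2d-2$. From a state $(x_1,\ldots,x_{2d-2}) \in \mathcal{S}$, sample $x_{2d-1}$ uniformly from $[c_1] \setminus \{x_1,\ldots,x_{2d-2}\}$ (a set of size $c_1 - 2d + 2$) and transition to $(x_2,\ldots,x_{2d-1})$. This chain is irreducible and doubly stochastic, so its unique stationary distribution $\pi$ is uniform on $\mathcal{S}$; in particular the marginal of $\pi$ on the first $d$ coordinates is uniform on ordered $d$-tuples of distinct colors.

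Define $f$ by drawing $(f(1),\ldots,f(2d-2)) \sim \pi$ and extending by the chain. By construction, any $2d-1$ consecutive vertices receive pairwise distinct colors. Two intersecting ridges share a vertex and each spans at most $d-1$ consecutive positions, so their union lies in a window of $2d-1$ consecutive vertices, which is rainbow colored; hence distinct $(d-1)$-subsets of this window have distinct color multisets, establishing the first conclusion.

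For the pattern sizes, fix $P \in \binom{[c_1]}{d-1}$ and index ridges by their minimum vertex: the number of ridges with $\min = i$ and pattern $P$ equals $Y_i := [P \subset C_i] \cdot [f(i) \neq C_i \setminus P]$, where $C_i = \{f(i),\ldots,f(i+d-1)\}$ and uniqueness uses injectivity of $f$ on size-$d$ windows. Each $Y_i$ depends only on the chain state at that step, and the marginal calculation gives $E_\pi[Y_i] = \frac{d}{\binom{c_1}{d-1}} \cdot \frac{d-1}{d} = \frac{d-1}{\binom{c_1}{d-1}}$ (the first factor is $\Pr_\pi[P \subset C_i]$ and the second is $\Pr_\pi[f(i) \in P \mid P \subset C_i]$ by the uniform marginal on ordered $d$-tuples). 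Writing $M(P) = \sum_i Y_i + O_d(1)$ for the total number of ridges with pattern $P$ and applying Theorem \ref{ergodic} (extended from single-state indicators to sums over the finite state space $\mathcal{S}$ by linearity of expectation) gives $M(P)/N \to (d-1)/\binom{c_1}{d-1}$ almost surely. A union bound over the $\binom{c_1}{d-1}$ patterns (finite for fixed $c_1, d$) yields, with probability tending to $1$ as $N \to \infty$, $M(P) \leq (1+\epsilon) N(d-1)/\binom{c_1}{d-1}$ simultaneously for all $P$; in particular such a coloring exists.

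I expect the main bookkeeping subtlety to be producing the coefficient $(d-1)$ rather than $d$ in the numerator: a naive tally via (window, deleted vertex) pairs double-counts the span-$(d-2)$ ridges (those occupying $d-1$ consecutive positions, contained in two windows), whereas indexing by minimum vertex counts each ridge exactly once and yields the correct factor $(d-1)/d$ within $E_\pi[Y_i]$. The remaining routine steps are verifying irreducibility and the uniform stationary distribution of the chain, and passing from Theorem \ref{ergodic}'s single-state statement to the convergence of time averages of the bounded state function $Y$, which is immediate on a finite state space.
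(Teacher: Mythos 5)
The statement you were asked to prove is Theorem \ref{ergodic} itself --- the ergodic theorem for irreducible finite-state Markov chains, which the paper quotes from \cite{Norris} without supplying a proof. Your write-up does not prove this statement; it \emph{invokes} it (``applying Theorem \ref{ergodic} \ldots gives $M(P)/N \to (d-1)/\binom{c_1}{d-1}$ almost surely'') as the key analytic input in an argument for Proposition \ref{firstcoloring}. Relative to the assigned statement this is circular: the one thing that needed to be established is precisely the thing you take as a black box. A proof of Theorem \ref{ergodic} would run along entirely different lines: by irreducibility on a finite state space the chain is positive recurrent, so the successive return times to a fixed state $i$ are, by the strong Markov property, i.i.d.\ with finite mean $m_i$; the strong law of large numbers then gives that the time $T_k$ of the $k$th return satisfies $T_k/k \to m_i$ almost surely; and since $T_{V_i(n)} \le n < T_{V_i(n)+1}$ with $V_i(n) \to \infty$, inverting this asymptotic yields $V_i(n)/n \to 1/m_i$ almost surely. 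None of these ingredients (strong Markov property, i.i.d.\ excursions, SLLN, the sandwiching of $n$ between consecutive return times) appears anywhere in your proposal, so as a proof of the stated theorem it is missing essentially everything.

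For what it is worth, read instead as a blind proof of Proposition \ref{firstcoloring} --- the statement your argument actually addresses --- your account is correct and in one respect more careful than the paper's: indexing ridges by their minimum vertex and computing $E_\pi[Y_i] = \frac{d}{\binom{c_1}{d-1}}\cdot\frac{d-1}{d}$ cleanly explains the coefficient $d-1$ in the numerator, which the paper obtains only by asserting that each new state of the chain ``colors $(d-1)$ ridges.'' But that is a different statement from the one under review, and crediting the argument here would amount to accepting the theorem as its own proof.
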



  \begin{proof}[of Proposition \ref{firstcoloring}]
    We assign a sequence of colors to the vertices $v_1,\dots,v_N$ of $SC(N,d)$ by a greedy randomized approach. At each vertex, we give it a random color of $[c_1]$ that was not used in the last $2(d-1)$ vertices. Since $c_1>6(d-1)>2(d-1)$, this is always possible.

    If two intersecting ridges shared the same subset of colors, in particular it means that two vertices, one from each ridge (but not from the intersection), have the same color. Since the facets of $SC(N,d)$ are consecutive $d$-sequences of vertices, two vertices from two intersecting ridges have to be at most $2(d-1)$ units apart. This proves that this coloring is valid.

    It remains to prove that the patterns are (almost) uniformly distributed. Observe that the randomized coloring procedure defined above defines a Markov Chain on the set:

    \[ S= \left\{ x\in [c_1]^{2(d-1)} : x_i \neq x_j \forall i,j\in [2(d-1)], i\neq j \right\}.\]

    In this Markov chain, we can get from any state $x\in S$ to any other $y$ by choosing first colors not in $x$ or $y$ (which is possible because $c_1>6(d-1)$ and each state has $2(d-1)4$ colors), then chosing colors in $y$. This takes $4(d-1)$ steps independently of the initial and final states. Therefore, the Markov chain is irreducible.

    Hence, by Theorem \ref{ergodic} (which we can apply because it is irreducible), as $N$ grows large enough, the proportion of visits to each state approximates the expected value arbitrarily, with probability $1$. Since the chain is symmetric via permutations of colors, this proportion of visits has to be the same for each state, this is, $\tfrac{1}{{c_1 \choose 2(d-1)} }$.

    Finally, each new state of the Markov chain colors $(d-1)$ ridges in the simplicial complex, the ridges that end precisely at the last vertex that we colored. Since the distribution of states is arbitrarily close to uniform, and the assignation of ridges for each state is symmetric again, the distribution of patterns has to be arbitrarily close to uniform with high probability as well.

    In more explicit terms, since there are ${c_1 \choose (d-1)}$ patterns, for $N$ large enough and any value of $\varepsilon>0$ each pattern appears at most $\frac{(1+\varepsilon)N(d-1)}{{c_1 \choose d-1}}$ times with high probability.

  \end{proof}

  We can generalize this result for faces of any other codimension. This will be relevant for the proof of Theorem \ref{pm}.
  \begin{corollary} \label{coro:firstcoloring}
    Let $d$, $k\in \{1,\dots, d-1\}$, $c_1>6(d-1)$, $\varepsilon>0$ fixed. There is a coloring of $SC(N,d)$ with $c_1$ colors such that all color classes of codimension $k$ faces have at most size $\frac{(1+\varepsilon)N\binom{d-1}{k}}{\binom{c_1}{d-k}}$ and no pair of intersecting $(d-k - 1)$-faces have the same pattern for $N$ large enough.
  \end{corollary}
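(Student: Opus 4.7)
I would prove the corollary by running the proof of Proposition \ref{firstcoloring} almost verbatim, with ``ridge'' replaced by ``codimension-$k$ face'' and the associated counts updated. The coloring itself is unchanged: color $v_1, \ldots, v_N$ in order, giving $v_i$ a uniformly random color from $[c_1]$ not used in the preceding $2(d-1)$ vertices, which is possible since $c_1 > 6(d-1)$. Every $(d-k-1)$-face of $SC(N,d)$ lies in some facet and hence occupies a window of $d$ consecutive positions, so two such faces that intersect share a vertex and together span a window of at most $2(d-1)+1$ positions. Any two distinct vertices drawn from the two faces are therefore within distance $2(d-1)$, carry distinct colors, and so force the patterns to differ; this yields the intersecting-faces conclusion.

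For the pattern-class size bound, reuse the same irreducible Markov chain on $S = \{x \in [c_1]^{2(d-1)} : x_i \neq x_j\}$, apply Theorem \ref{ergodic}, and invoke symmetry of the transition rule under color permutations to conclude that each state has limiting frequency $1/|S|$ with probability $1$. Indexing each codimension-$k$ face by its largest vertex, the faces ending at $v_i$ are the $(d-k)$-subsets of $\{v_{i-d+1}, \ldots, v_i\}$ containing $v_i$, numbering $\binom{d-1}{d-k-1} = \binom{d-1}{k}$. For a fixed pattern $P$, at most one face ending at $v_i$ realizes $P$: if the color of $v_i$ is not in $P$ the count is zero, and otherwise the $d-k-1$ other required colors each appear at most once in the window, so the face is uniquely determined (if it exists at all). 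Summing over patterns gives $\binom{d-1}{k}$ faces per position, so by color-permutation symmetry each pattern has expected count $\binom{d-1}{k}/\binom{c_1}{d-k}$ per position; summing over $v_1, \ldots, v_N$ and applying Theorem \ref{ergodic} as in the original proof supplies the desired $(1+\varepsilon)N\binom{d-1}{k}/\binom{c_1}{d-k}$ bound for each pattern once $N$ is large.

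The main bookkeeping point is passing from the almost-sure per-pattern ergodic estimate to a simultaneous $(1+\varepsilon)$ bound valid for all $\binom{c_1}{d-k} = O_d(1)$ patterns, which a union bound handles exactly as in the codimension-$1$ case; I do not expect this step to be a genuine obstacle, since neither the chain nor the symmetry argument depends in any essential way on $k$.
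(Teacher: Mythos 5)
Your proposal is correct and follows essentially the same route as the paper's proof: the identical greedy random coloring and Markov chain, the observation that intersecting codimension-$k$ faces lie in a window of diameter $2(d-1)$ and hence get distinct patterns, and the count of $\binom{d-1}{d-k-1}=\binom{d-1}{k}$ faces indexed by their last vertex. The only additions are welcome details the paper leaves implicit (at most one face per position realizes a given pattern, and the union bound over the $O_d(1)$ patterns).
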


  \begin{proof}
    We replicate the proof of Proposition \ref{firstcoloring}. Here, the random greedy algorithm picks a color that has not been used in the last $2(d-1)$ as before, which is enough to guarantee that no pair of intersecting $k$ faces have the same pattern.

    The Markov chain remains unchanged, but each state of $S$ corresponds now to $\binom{d-1}{k}$ new $(d-k-1)$-faces that end at the current vertex. This comes from the fact that a $(d-k - 1)$-face is a sequence of $d$ consecutive vertices with $k$ of them removed, but we do not remove the last one (in order to give each face a unique state, and count them once).

    Hence, the distribution on patterns of faces is uniform as well, and arbitrarily close to the expectation. This is, for $N$ large enough, every pattern class will have at most $\frac{(1+\varepsilon)N\binom{d-1}{k}}{\binom{c_1}{d-k}}$ ridges.
  \end{proof}

	\subsection{The second coloring}
	
We are now ready to refine the coloring so that no pair of ridges receive the same pattern. We do this via Proposition \ref{secondstep} which we state in a fairly general way to apply it to the pseudomanifold case later. 
	\begin{proposition} \label{secondstep}
		If $C$ is a $(d - 1)$-simplicial complex and there is a coloring of $C$ on at most $c_1$ colors such that no color class of ridges has size more than $S$, no intersecting ridges receive the same pattern, and for any ridge $\sigma$, there are at most $t$ other ridges which intersect $\sigma$ then there is a refinement of the coloring so that having at most
		$c_1 \lceil \sqrt[d - 1]{e(2tS + 1)} \rceil$ colors and every ridge colored uniquely.
	\end{proposition}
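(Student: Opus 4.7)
The plan is to refine the given coloring by taking its product with an independent uniformly random coloring $g$ on $c_2 := \lceil \sqrt[d-1]{e(2tS+1)}\rceil$ auxiliary colors; since $f$ is proper, so is $h := (f,g)$, giving $c_1 c_2$ colors in total. I would then use the Lov\'asz Local Lemma to show that, with positive probability over $g$, every ridge of $C$ receives a distinct $h$-pattern.

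Because intersecting ridges are already separated by $f$, the only possible collisions are between pairs of disjoint ridges $\{\sigma,\tau\}$ sharing an $f$-pattern. For each such pair, let $B_{\sigma,\tau}$ be the event that their $g$-refinements also agree. Since $f$ is proper, sharing an $f$-pattern induces a canonical bijection $\sigma \to \tau$ matching $f$-colors, so $h$-patterns coincide iff $g$ agrees on each of the $d-1$ matched vertex pairs. These pairs involve $2(d-1)$ distinct vertices with independent $g$-colors, giving $\Pr(B_{\sigma,\tau}) = c_2^{-(d-1)}$.

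For the LLL, two bad events are mutually independent whenever $\sigma \cup \tau$ and $\sigma' \cup \tau'$ are vertex-disjoint, so I would bound the dependency degree $D$ by the number of bad pairs sharing a vertex with $\sigma \cup \tau$. By hypothesis at most $t$ ridges other than $\sigma$ meet $\sigma$, and similarly for $\tau$, so at most $2(t+1)$ ridges in total meet $\sigma \cup \tau$; each such ridge sits in a pattern class of size at most $S$ and so participates in at most $S-1$ bad pairs. After accounting for the double-counting of pairs both of whose ridges meet $\sigma \cup \tau$, and for excluding $B_{\sigma,\tau}$ itself, the target is $D + 1 \leq 2tS + 1$. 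With this bound, the symmetric LLL condition $e\,\Pr(B_{\sigma,\tau})\,(D+1) \leq 1$ becomes $c_2^{d-1} \geq e(2tS+1)$, which is precisely ensured by the choice of $c_2$, and so with positive probability some realization of $g$ avoids every $B_{\sigma,\tau}$, producing the required refinement.

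The main obstacle is the bookkeeping in the dependency count: a naive bound yields something like $2(t+1)(S-1)$, and tightening this to $2tS+1$ requires being careful about which bad pairs are genuinely excluded, in particular $B_{\sigma,\tau}$ itself and the overcount arising from pairs both of whose ridges already lie in the $\leq 2(t+1)$-element intersecting set. Once this sharper count is in place, the rest is a direct invocation of the symmetric Lov\'asz Local Lemma.
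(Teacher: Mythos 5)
Your proposal follows the paper's proof essentially verbatim: refine by a product with a uniform random $c_2$-coloring, bound $\Pr(B_{\sigma,\tau})$ by $c_2^{-(d-1)}$ via the canonical color-matching bijection between disjoint ridges with equal $f$-pattern, and apply the symmetric Local Lemma. The one point you leave open --- getting the dependency degree down to $2tS$ --- is handled in the paper by counting dependent pairs directly rather than through the set of ridges meeting $\sigma \cup \tau$: a pair $\{\sigma',\tau'\}$ dependent on $\{\sigma,\tau\}$ must have a member meeting $\sigma$ or a member meeting $\tau$ (a factor of $2$); naming that member $\sigma'$, there are at most $t$ choices for it by hypothesis, and then at most $S$ choices for $\tau'$ within the pattern class of $\sigma'$, giving $2tS$ with no subtractions or inclusion--exclusion required. (Strictly speaking, this count omits pairs in which the meeting ridge is $\sigma$ or $\tau$ itself; including them only replaces $t$ by $t+1$, which is immaterial in the applications since $t$ is already taken to be a generous upper bound there.)
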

	As is the strategy in \cite{PrescribedTorsion,HomotopyTypes}, we will prove this proposition from the Lov\'{a}sz Local Lemma, which we recall in the symmetric version from \cite{AS} below:
\begin{theorem}[Lov\'{a}sz Local Lemma \cite{EL}]
Let $A_1, A_2, ..., A_n$ be events in an arbitrary probability space. Suppose that each event $A_i$ is mutually independent of all the other events $A_j$ but at most $m$, and that $\Pr[A_i] \leq p$ for all $1 \leq i \leq n$. If $ep(m+1) \leq 1$ then $\Pr[\bigwedge_{i = 1}^n \overline{A_i}] > 0$
\end{theorem}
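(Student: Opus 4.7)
The plan is to prove the stronger asymmetric form of the Lov\'{a}sz Local Lemma first and then recover the stated symmetric version as a corollary. The asymmetric form asserts that if one can assign weights $x_1,\dots,x_n\in(0,1)$ satisfying $\Pr[A_i] \le x_i\prod_{j\sim i}(1-x_j)$ for every $i$ (where $j\sim i$ means $A_j$ lies in the dependency neighborhood of $A_i$), then $\Pr\bigl[\bigwedge_i \overline{A_i}\bigr] \ge \prod_i (1-x_i) > 0$. The technical heart is the inductive claim that for every $i$ and every $S\subseteq\{1,\dots,n\}\setminus\{i\}$ with $\Pr\bigl[\bigwedge_{j\in S}\overline{A_j}\bigr] > 0$, one has
$$\Pr\!\left[A_i \,\bigg|\, \bigwedge_{j\in S}\overline{A_j}\right] \le x_i.$$

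I would prove this claim by induction on $|S|$. The base case $|S|=0$ is immediate from the hypothesis. For the inductive step, partition $S$ into $S_1=\{j\in S : j\sim i\}$ and $S_2=S\setminus S_1$, then write the conditional probability as the ratio
$$\frac{\Pr\bigl[A_i \wedge \bigwedge_{j\in S_1}\overline{A_j} \,\big|\, \bigwedge_{j\in S_2}\overline{A_j}\bigr]}{\Pr\bigl[\bigwedge_{j\in S_1}\overline{A_j} \,\big|\, \bigwedge_{j\in S_2}\overline{A_j}\bigr]}.$$
The numerator is at most $\Pr\bigl[A_i \,\big|\, \bigwedge_{j\in S_2}\overline{A_j}\bigr] = \Pr[A_i] \le x_i \prod_{j\sim i}(1-x_j)$, where the equality uses mutual independence of $A_i$ from the $S_2$-events. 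The denominator expands as a telescoping product whose $\ell$-th factor is $\Pr\bigl[\overline{A_{j_\ell}} \,\big|\, \bigwedge_{m<\ell}\overline{A_{j_m}} \wedge \bigwedge_{j\in S_2}\overline{A_j}\bigr]$; each such conditioning set has size strictly less than $|S|$, so the inductive hypothesis bounds each factor below by $1-x_{j_\ell}$, yielding a denominator at least $\prod_{j\in S_1}(1-x_j) \ge \prod_{j\sim i}(1-x_j)$. The ratio therefore collapses to at most $x_i$.

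Once the inductive claim is established, the desired lower bound $\Pr\bigl[\bigwedge_i \overline{A_i}\bigr] \ge \prod_i(1-x_i) > 0$ follows by expanding the joint probability as a telescoping product $\prod_i \Pr\bigl[\overline{A_i} \,\big|\, \bigwedge_{j<i}\overline{A_j}\bigr]$ and applying the claim to each factor. To recover the symmetric statement as worded in the theorem, I would set $x_i = 1/(m+1)$ uniformly; the asymmetric hypothesis then reduces to $p \le \frac{1}{m+1}\bigl(1-\tfrac{1}{m+1}\bigr)^m$, and the elementary inequality $\bigl(1-\tfrac{1}{m+1}\bigr)^m \ge 1/e$ shows this is implied by the stated condition $ep(m+1)\le 1$.

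The main subtlety is bookkeeping of conditioning: every ratio appearing in the inductive step must have positive denominator, and the inductive hypothesis must genuinely apply at each level of the telescoping expansion. Fortunately the positivity condition propagates for free, since the bound $\Pr[A_j \mid \cdot] \le x_j < 1$ immediately yields $\Pr[\overline{A_j} \mid \cdot] > 0$, but it is important to carry this positivity along inside the inductive statement rather than try to verify it separately.
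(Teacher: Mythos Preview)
Your proof is correct and follows the standard textbook derivation of the symmetric Lov\'{a}sz Local Lemma via the asymmetric version (this is essentially the argument in Alon--Spencer, which the paper cites as \cite{AS}). Note, however, that the paper does not supply its own proof of this theorem: it is stated as a quoted result from \cite{EL} and used as a black box in the proof of Proposition~\ref{secondstep}. So there is no ``paper's own proof'' to compare against; your proposal simply fills in a classical result that the authors took for granted.
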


\begin{proof}[of Proposition \ref{secondstep}]
Let $C$ be the complex described and colored by $f: V(C) \rightarrow [c_1]$ so that every color class of ridges has size at most $S$ and no intersecting ridges receive the same pattern by $f$. We will find a second coloring $g$ so that the final coloring satisfying the conclusions of the statements will be $(f, g)$. The coloring $g$ will be constructed randomly by choosing for each vertex a random color uniformly from a set of $c_2$ colors to be determined later. \\

For $\sigma, \tau$ with $f(\sigma) = f(\tau)$, let $A_{\sigma, \tau}$ denote the event that $\sigma$ and $\tau$ receive the same color by $(f, g)$. Clearly if
$$\Pr\left(\bigwedge_{\substack{(\sigma, \tau) \text{ facets of $C$} \\ \text{s.t. } f(\sigma) = f(\tau)}}\overline{A_{\sigma, \tau}}\right) > 0$$
then there exists a choice for $g$ on $c_2$ colors so that $(f, g)$ colors every ridge uniquely. Indeed, for $\sigma$, $\tau$ with $f(\sigma) \neq f(\tau)$, refining the coloring by $g$ will not cause $\sigma$ and $\tau$ to receive the same pattern under $(f, g)$. We thus apply the Local Lemma to the $A_{\sigma, \tau}$. \\

First, for $\sigma, \tau$ (necessarily disjoint) so that $f(\sigma) = f(\tau)$, we have that there is a bijection $\phi: \sigma \rightarrow \tau$ sending each vertex in $\sigma$ to the unique vertex in $\tau$ that receives the same coloring by $f$. Thus $(f, g)$ gives $\sigma$ and $\tau$ the same pattern if and only if $g(v) = g(\phi(v))$ for all $v \in \sigma$. Thus we have the following bound on $A_{\sigma, \tau}$.
$$\Pr(A_{\sigma, \tau}) \leq \left(\frac{1}{c_2}\right)^{d - 1}$$\\

We now bound, for fixed $(\sigma, \tau)$ the number of pairs $(\sigma', \tau')$ so that $(\sigma', \tau')$ is not independent from $(\sigma, \tau)$ \emph{and so that $f(\sigma') = f(\tau')$}. Clearly if $(\sigma \cup \tau) \cap (\sigma' \cup \tau')  = \emptyset$, then $A_{\sigma, \tau}$ is independent of $A_{\sigma', \tau'}$. Thus, we have that $A_{\sigma, \tau}$ is independent of all but at most $2tS$ other events $A_{\sigma', \tau'}$. Indeed for $(\sigma, \tau)$ fixed, we may assume without loss of generality that $\sigma$ intersects $\sigma'$ or $\tau'$ (for the factor of 2), if $\sigma$ is to intersect $\sigma'$, there are at most $t$ such choices for $\sigma'$, and finally with $\sigma'$ chosen we know $f(\sigma')$ and so there are at most $S$ ridges with the same pattern to choose for $\tau'$. \\

It follows that choosing $c_2$ large enough that
$$e \frac{1}{c_2^{d - 1}} (2tS + 1) \leq 1$$
will imply that with positive probability $A_{\sigma, \tau}$ fails to hold simultaneously for all pairs $\sigma, \tau$ with $f(\sigma) = f(\tau)$. Thus we set $c_2 = \lceil \sqrt[d - 1]{e (2tS + 1)} \rceil$, and complete the proof.

\end{proof}

With Proposition \ref{firstcoloring} and \ref{secondstep} now proved we are ready to prove Theorem \ref{simplicial}.
\begin{proof}[of the lower bound for Theorem \ref{simplicial}]
Let $d \geq 3$ be fixed. The result is asymptotic in $n$, so here we fix $c_1 > 6(d - 1)$ and $\epsilon > 0$, and we will show the constant factor in the lower bound emerges as $c_1 \rightarrow \infty$ and $\epsilon \rightarrow 0$.

Let $N$ be large enough so that there exists a coloring of $SC(N, d)$ with $c_1$ colors so that all color classes of ridges have size at most $S = \frac{(1 + \epsilon)(d - 1)N}{\binom{c_1}{d - 1}}$, and let $f: V(SC(N, d)) \rightarrow [c_1]$ be such a coloring. To apply Proposition \ref{secondstep} we need to upper bound the maximum number of ridges that intersect any given ridge, that is in the notation of the proposition we should find a suitable value for $t$ in the case of $SC(N, d)$.

The facets of $SC(N,d)$ are $d$ consecutive vertices in $[N]$, so the ridges are obtained by taking any $d$ consecutive elements of $[N]$ and removing exactly one element. Thus for any fixed ridge $\sigma$, there are at most $2d^2$ ridges $\tau$ that intersect $\sigma$. Indeed for $\sigma$ given we find a (canonical) facet containing $\sigma$, denoted by $F$. Now $F$ intersects at most $2d$ other facets and each of those contains $d$ ridges. This upper bounds the number of ridges intersecting $\sigma$ by $2d^2$.

With $c_1$, $S$, and $t$ determined we may apply Proposition \ref{secondstep} to say that there is a coloring of $SC(N, d)$ by at most $c_1c_2$ colors where $c_2 = \lceil \sqrt[d - 1]{e(2tS + 1)} \rceil$ so that every ridge has a unique pattern. Let $g: V(SC(N, d)) \rightarrow [c_1c_2]$ be such a coloring. By Lemma \ref{lemma1} the complex $X := SC(N, d)/ g$ has diameter equal to the diameter of $SC(N, d)$. Therefore, the diameter of $X$ is $N - d$, and the number of vertices of $X$ is at most
\begin{eqnarray*}
c_1c_2 &=& c_1 \lceil \sqrt[d - 1]{e(2tS + 1)} \rceil \\
&=& c_1 \left \lceil \sqrt[d - 1]{e\left(2(2d^2) \frac{(1 + \epsilon) (d - 1)N}{\binom{c_1}{d - 1}} + 1\right)} \right \rceil \\
\end{eqnarray*}
Now as $c_1$ tends to infinity $\frac{c_1^{d - 1}}{\binom{c_1}{d - 1}}$ tends to $(d - 1)!$. Thus, given $\delta > 0$ we may set $\epsilon$ small enough and $c_1$ large enough so that for all $N$ large enough we have a complex on at most $(1 + \delta) \sqrt[d - 1]{(d - 1)!4d^3(N - d)e}$ vertices with diameter $N - d$. Letting $n$ be the number of vertices in this complex we have a complex on $n$ vertices with diameter at least
$$\left(\frac{1}{1 + \delta}\right)^{d -1} \frac{n^{d - 1}}{4ed^2d!}$$
As $\delta$ is arbitrary this proves the theorem.
\end{proof}

\section{Pseudomanifold case}
Here we prove Theorem \ref{pm}. In this case we establish a smaller upper bound than in the general case based on the observation that the dual graph of a $(d - 1)$-dimensional pseudomanifold is a $d$-regular graph.

Now for (arbitrary) regular graphs we have the following bound on the diameter, this result is a special case of Theorem 5 of \cite{CaccettaSmyth} and also appears in a different form earlier in \cite{Moon}\\

\begin{theorem}[\cite{Moon,CaccettaSmyth}]\label{GraphBound}
Let $G$ be a connected $d$-regular graph on $n$ vertices then
$$\diam(G) \leq \frac{3n}{d + 1}$$
\end{theorem}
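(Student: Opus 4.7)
The plan is to use a ball-packing argument along a geodesic. Let $D = \diam(G)$ and fix a geodesic path $v_0, v_1, \ldots, v_D$ in $G$ realizing the diameter. Since $G$ is $d$-regular, each closed neighborhood $N[v_i] := \{v_i\} \cup \{u : uv_i \in E(G)\}$ has exactly $d + 1$ elements. I will extract a subset of these neighborhoods that are pairwise disjoint and cover a large portion of the vertex set.

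First I would show that if $0 \leq i < j \leq D$ with $j - i \geq 3$, then $N[v_i] \cap N[v_j] = \emptyset$. Indeed, any common vertex $u \in N[v_i] \cap N[v_j]$ would yield a walk $v_i \to u \to v_j$ of length at most $2$, contradicting the geodesic property that gives $d_G(v_i, v_j) = j - i \geq 3$. Applying this observation to the subsequence $v_0, v_3, v_6, \ldots, v_{3k}$ where $k = \lfloor D/3 \rfloor$ produces $k + 1$ pairwise disjoint closed neighborhoods.

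Counting vertices then gives
$$(k + 1)(d + 1) \leq n,$$
so $k \leq \frac{n}{d + 1} - 1$, and therefore
$$D \leq 3k + 2 \leq \frac{3n}{d + 1} - 1 \leq \frac{3n}{d + 1}.$$

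There is no real obstacle here: the argument is a one-shot packing bound, and the only step requiring a moment's care is confirming disjointness of neighborhoods spaced $3$ apart along a geodesic, which follows directly from minimality of distances on the chosen path. The constant $3$ in the statement corresponds precisely to the spacing needed to guarantee this disjointness, and the factor $(d+1)$ to the size of each closed neighborhood in a $d$-regular graph.
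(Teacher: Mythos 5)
Your proof is correct. Note that the paper does not actually prove this theorem; it is quoted from the literature (as a special case of Theorem 5 of Caccetta--Smyth, with an earlier form due to Moon), so there is no in-paper argument to compare against. What you have written is the standard ball-packing argument along a geodesic, and every step checks out: in a simple $d$-regular graph each closed neighborhood has exactly $d+1$ vertices, the neighborhoods $N[v_i]$ and $N[v_j]$ are disjoint whenever $j-i\ge 3$ because a common vertex would force $d_G(v_i,v_j)\le 2$ on a geodesic, and the arithmetic $(k+1)(d+1)\le n$ with $D\le 3k+2$ yields $D\le \tfrac{3n}{d+1}-1$, which is even slightly stronger than the stated bound. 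This is essentially the same counting idea that underlies the cited results, so you have supplied a clean, self-contained justification for a step the paper outsources to a reference.
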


From this it follows that we may upper bound the combinatorial diameter of a pseudomanifold as in Theorem \ref{pm}
\begin{proof}[of the upper bound for Theorem \ref{pm}]
Here we use the notation $f_i$ for a complex to denote the usual $f$-vector entry. Let $C$ be a $(d -1)$-dimensional pseudomanifold on $n$ vertices, then $\graph(C)$ is a $d$-regular graph on $f_{d-1}(C)$ vertices. Thus by Theorem \ref{GraphBound} we have that
$$\diam(\graph(C)) \leq \frac{3 f_{d - 1}(C)}{d + 1}$$
Now, since $C$ is a $(d - 1)$-dimensional  pseudomanifold on $n$ vertices,
$$df_{d - 1}(C) = 2f_{d - 2}(C) \leq 2 \binom{n}{d - 1}$$
Thus,
$$\diam(\graph(C)) \leq \frac{6}{d(d + 1)} \binom{n}{d - 1} \leq \frac{6 n^{d-1}}{(d + 1)!}.$$
\end{proof}

Here we prove Theorem \ref{pm} by the same method as Theorem \ref{simplicial} with a different starting construction. For given $d$, we will start with the boundary of $SC(N, d + 1)$, denoted $\partial SC(N, d + 1)$ and color the vertices as in Lemma \ref{lemma1}. It is clear that $\partial SC(N, d + 1)$ is a pseudomanifold. Indeed it a triangulated sphere as it has the combinatorial type of a stacked polytope. Here the diameter of the starting complex is less obvious than the general case, so we first prove the following lemma to establish a lower bound on the diameter of $\partial SC(N, d + 1)$.

\begin{lemma}\label{pmdiameter}
The diameter of $\partial SC(N, d + 1)$ is at least $\frac{d - 1}{d} N - d $.
\end{lemma}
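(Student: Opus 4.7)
The plan is to exhibit two facets of $\partial SC(N,d+1)$ at large dual-graph distance by means of a potential function argument. A ridge of $SC(N,d+1)$ is a facet of $\partial SC(N,d+1)$ exactly when it is contained in a single top facet, and a direct check shows these are the sets
\[
\tau_{k,p} := [k,k+d] \setminus \{k+p\},
\]
for $k \in \{1,\dots,N-d\}$, where $p \in \{1,\dots,d-1\}$ for interior $k$, with the extra value $p=d$ at $k=1$ (giving $[1,d]$) and $p=0$ at $k=N-d$ (giving $[N-d+1,N]$); these extras correspond to ridges at the ``ends'' with no neighboring top facet to be shared with.

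Next I would analyze the adjacency structure of the dual graph. Since $\tau_{k,p}\subseteq[k,k+d]$ and any two adjacent facets intersect in a $(d-1)$-subset, interval overlap forces $|k-k'|\le 2$. A case analysis shows: (a) for $k=k'$, any two distinct $\tau_{k,p}, \tau_{k,p'}$ are adjacent; (b) for $k'=k+1$, $\tau_{k,p}$ and $\tau_{k+1,p'}$ are adjacent iff they delete the same global vertex, i.e.\ $k+p=(k+1)+p'$, equivalently $p'=p-1$; (c) for $k'=k+2$, adjacency occurs exactly when $p=1$ and $p'=d-1$. Guided by (c), I introduce the potential
\[
\phi(\tau_{k,p}) := dk - (k+p) = (d-1)k - p,
\]
that is, $d$ times the position minus the removed vertex. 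Direct computation yields $|\phi(\tau)-\phi(\tau')|\le d$ along each edge: $|p-p'|\le d-1$ in case (a), and exactly $d$ in cases (b) and (c).

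Finally, taking $\tau_0 := \tau_{1,d} = [1,d]$ and $\tau_1 := \tau_{N-d,0} = [N-d+1,N]$, one has $\phi(\tau_0)=-1$ and $\phi(\tau_1)=(d-1)(N-d)$, so the standard potential argument gives
\[
\diam(\graph(\partial SC(N,d+1))) \ge \frac{(d-1)(N-d)+1}{d} > \frac{d-1}{d}N - d.
\]
The main obstacle is case (c) of the adjacency analysis together with the verification that $\phi$ shifts by exactly $d$ on such jumps; this is precisely what pins down the coefficient $\tfrac{d-1}{d}$, reflecting that the fastest a dual-graph path can progress is $d$ units of position per $d-1$ edges, obtained by chaining $d-2$ type-(b) edges with a single type-(c) jump.
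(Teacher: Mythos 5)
Your proposal is correct and follows essentially the same route as the paper's proof: a potential function on the facets of $\partial SC(N,d+1)$ (your $\phi(\tau_{k,p})=(d-1)k-p$ is just $(d-1)$ times the paper's $p(\sigma_{i,j})=i-\tfrac{j}{d-1}$), the same three-case adjacency analysis, and the same two extremal facets as endpoints. The only cosmetic difference is that you fold the two end facets into the parametrization via $p=d$ and $p=0$ rather than treating them separately.
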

\begin{proof}
  The facets of $\partial SC(N,d+1)$ are boundary ridges of $SC(N,d+1)$. Ridges of $SC(N, d + 1)$ are sequences of consecutive $d+1$ vertices where one of them is missing. Interior ridges of $SC(N,d+1)$ are sequences of $d$ consecutive vertices of $[N]$, except the first and last such sequences (they are only contained in the first and last facets of $SC(N,d+1)$, and thus, they are boundary ridges). Then, there are two types of boundary ridges of $SC(N,d+1)$: the middle ridges, which are sequences of $d+1$ vertices with one missing (but not the first or the last), plus the two special ridges (the first and the last).

  We denote these two special facets of $\partial SC(N,d+1)$ by $\alpha$ and $\omega$. We denote the remaining (middle) facets by $\sigma_{i,j}=\{i,i+1,i+2,\dots, i+d\} \setminus \{i+j\}$, where $i\in [N-d]$ and $j\in\{1,\dots d-1\}$.

  Let $p$ be a potential function over the middle facets of $\partial SC(N,d+1)$, defined by $p(\sigma_{i,j})= i - \tfrac{j}{d-1}$. We will show that every move from one middle facet to an adjacent middle facet increases this potential at most by $1+\tfrac{1}{d-1}$.

  A move in the dual graph corresponds to removing one vertex from a facet, and adding a new vertex. The set of vertices shared by the initial and final facets form then the ridge connecting the two.

  Since $\partial SC(N,d+1)$ is a pseudomanifold, every facet is adjacent to exactly $d$ facets, corresponding to $d$ choices of a vertex to remove (and the new vertex to add is uniquely determined because in a pseudomanifold every ridge is in exactly two facets). There are three cases:

  \begin{itemize}
    \item We remove the last vertex of $\sigma_{i,j}$. In this case, the adjacent facet is $\sigma_{i-1,j+1}$ if $j\neq d-1$. or to $\sigma_{i-2, 1}$ if $j=d-1$. In both cases, the potential decreased.
    \item We remove a middle vertex $i+j'\in \{i+1,\dots, i+d-1\}$. Then, the adjacent facet is $\sigma_{i,j'}$, and the potential increased at most by $\tfrac{d-1}{d-1}=1$.
    \item We remove the first vertex. If $j=1$, the adjacent facet is $\sigma_{i+2, d-1}$. If $j\neq 1$, the next facet is $\sigma_{i+1, j-1}$. In both cases, the increase in potential is $1+\tfrac{1}{d-1}$.
  \end{itemize}

  Therefore, any step between middle facets of $\partial SC(N,d+1)$ increases the potential at most by $1+\frac{1}{d-1}=\frac{d}{d-1}$. Observe that $\alpha$ is adjacent only to facets with low ($\leq 1$) potential, since its neighborhood is $N(\alpha)= \{ \sigma_{1,j} : j= 1\dots d-1\} \cup \{\sigma_{2,d-1}\}$. Similarly, the neighborhood of the other extremal facet is $N(\omega)=\{\sigma_{N-d,j} : j=2,\dots, d\} \cup \{\sigma_{N-d-1,1}\}$, so $\omega$ is adjacent only to facets with high ($\geq N-d-2$) potential.

  Any path from $\alpha$ to $\omega$ passes only through middle facets, and increases the potential from a starting value at most $1$ to a final value of at least $N-d-2$ potential. And every step between these facets increases the potential at most by $\frac{d}{d-1}$. Hence, the middle part of the path takes at least $\left((N-d-2)-1\right) \frac{d-1}{d}$ steps. And the path from $\alpha$ to $\omega$ is at least two steps longer than that.

  This is, $\alpha$ and $\omega$ are at least $((N-d-2)-1)\frac{d-1}{d}+2= \frac{d-1}{d}N - d +\frac{3}{d} > \frac{d-1}{d}N -d$ steps apart, and the diameter of $\partial SC(N,d+1)$ is at least as large.

\end{proof}

Now we want to color $\partial SC(N, d + 1)$ as in the proof of the general case to apply Lemma \ref{lemma1}.

\begin{proof}[of the lower bound for Theorem \ref{pm}]
As in the proof of Theorem \ref{simplicial}, fix $d \geq 2$ as well as $c_1 > 6(d - 1)$ and $\epsilon > 0$. We let $N$ be large enough so that we may apply Corollary \ref{coro:firstcoloring} to color $SC(N, d + 1)$ by a coloring $f$ with $c_1$ colors so that all color classes of $(d - 2)$-dimensional faces have size at most
$$S = \frac{(1 + \epsilon)N \binom{d}{2}}{\binom{c_1}{d - 1}}.$$
This coloring is then a coloring of $\partial SC(n, d + 1)$ so that no color class of ridges has size more than $S$ and no intersecting ridges receive the same pattern. \\

In order to apply Proposition \ref{secondstep} to $\partial SC(N, d + 1)$, we need to determine an upper bound $t$ for the maximum number of ridges that intersect any given ridge. We may use $t = (d + 1)^3$. Indeed given a ridge $\sigma$, there exists a (canonical) $d$-dimensional face in $SC(N, d + 1)$ which contains it, this $d$-dimensional intersects $2d$ other $d$-dimensional faces, and each of those contains $\binom{d + 1}{2}$ $(d - 2)$-dimensional faces. Thus the number of ridges in the boundary intersecting $\sigma$ is at most $2d \binom{d + 1}{2} \leq (d + 1)^3$. \\

With $c_1$, $S$, and $t$ determined we apply Proposition \ref{secondstep} to color $\partial SC(N, d + 1)$ with at most
$$c_1 \left \lceil \sqrt[d - 1]{e(2(d + 1)^3 \frac{(1 + \epsilon)N \binom{d}{2}}{\binom{c_1}{d - 1}} + 1)} \right \rceil$$
colors so that no pair of ridges receive the same pattern.

Thus, exactly as in the proof of Theorem \ref{simplicial} we have that for any $\delta > 0$, we may set $c_1$ large enough and $\epsilon$ small enough so that for $N$ large enough have a complex on at most
$$(1 + \delta) \sqrt[d - 1]{e(d - 1)!(d + 1)^3 N d^2/2}$$
vertices, which has diameter at least
$$(1 - \delta) \frac{d - 1}{d} N.$$

Letting $n$ denote the number of vertices in this complex we have a complex on $n$ vertices whose diameter is at least
\begin{eqnarray*}
&&(1 - \delta)\left(\frac{1}{1 + \delta}\right)^{d - 1} \frac{2n^{d - 1}}{e(d + 1)^3 [d^3/(d - 1)] (d - 1)!} \\
 &\leq& (1 - \delta) \left( \frac{1}{1 + \delta} \right)^{d - 1} \frac{n^{d - 1}}{4e d^4 d!} \\
\end{eqnarray*}
As $\delta$ is arbitrary, this proves the claim.
\end{proof}

\section*{Acknowledgments}
The authors thank Francisco Santos for several helpful comments on an earlier draft of this article. 

A. Newman gratefully acknowledges funding by the Deutsche Forschungsgemeinschaft (DFG, German Research Foundation) Graduiertenkolleg ``Facets of Complexity" (GRK 2434).

\bibliographystyle{amsplain}
\bibliography{bibliography}

\end{document}